\newtheorem{theorem}{Theorem}[section]
\newtheorem{lemma}[theorem]{Lemma}
\theoremstyle{definition}
\newtheorem{definition}[theorem]{Definition}
\def\subsubsection{\@startsection{subsubsection}{3}%
	\z@{.5\linespacing\@plus.7\linespacing}{-.5em}%
	{\normalfont\bfseries}}
\author{Peter Berkics}
\title{On self-adjoint linear relations}
\address{University of Pécs, Hungary\\
	H-7633 Pécs, Ifjúság útja 6.} \email{berkicsp@gamma.ttk.pte.hu}
\subjclass[2010]{47A05,47A06,47B25,47B65}
\keywords{Linear operator, Graph of operator, Linear relation, Adjoint of a linear relation, Multi-valued linear operator, von Neumann Theorem }
\begin{document}
	\maketitle

	\begin{abstract}
		
		A linear operator on a Hilbert space $\mathbb{H}$, in the classical approach of von Neumann,  must be symmetric to guarantee self-adjointness. However, it can be shown that the symmetry could be ommited by using a criterion for the graph of the operator and the adjoint of the graph. Namely, $S$ is shown to be densely defined and closed if and only if $\{k+l:\{k,l\}\in G(S)\cap G(S)^*  \}=\mathbb{H}$.

		In a more general setup, we can consider relations instead of operators and we prove that in this situation a similar result holds. We give a necessary and sufficient condition for a linear relation to be densely defined and self-adjoint.

	\end{abstract}

	\section{Inroduction}
	\pagenumbering{arabic}
	\setcounter{page}{1}
	
	Self-adjoint operators on a Hilbert space are an old research topic and very important in many applications. There are well-known old results about them. Recently it turned out that classical theorems can be generalized and even reversed. The new results are quite surprising and we will state them. In this paper we give a generalization of the Stone lemma and another equivalent properties for self-adjointness of operators.

	One can see that this machinery can be treated in the more general language of linear relations. A natural question is that what properties are valid and how we can state the classical theorems in this new language. 
	
	Relations are quite basic notion and well studied in mathematics. They have a crucial role in many topics. It turns out that multi-valued operators can be represented as linear relations. The research of linear relations, as generalization of operators, was started by Richard Arens in 1961 \cite{arens}, who laid the foundation of the operational calculus of linear relations. It is nowadays an active research topic. There are recent generalizations of classical theorems like the von Neumann theorem which states that for a linear operator $T$, the operator $T^*T$ is self-adjoint (see in \cite{sandovici}) and more and more basic results of functional analysis are formulated on linear relation language.\\
	
	We can generalize these results for linear relations, as all the necessary tools make sense for relations. This point of view allows us to apply basic operator properties for even more branches of mathematics. \\
	 
	The generalization of operator theory results on linear relation language has a fairly young history. In the late seventies T. Kato ( \cite{kato1 }, \cite{kato2}) considered unbounded self-adjoint extensions of the sum of two self-adjoint operators in a Hilbert space. To treat this problem, he introduced the concept of form sum. Also, the concept of linear relation arises in the investigation of self-adjoint extensions of the sum of two unbounded self-adjoint operators in a Hilbert space $\mathbb{H}$. The sum is not necessarily a densely defined nonnegative operator on the intersection of the domains, and even the closure of the sum is not necessarily an operator. On the other hand, the sum operator is nonnegative and it has a self-adjoint extension in $\mathbb{H}$ in the sence of multivalued operators i. e. linear relations. \\
	
	After this, a new active research period started in 2006, and since then some important results appeared on generalization of classical operator theory theorems to linear relation language.  \\
	
	Our goal in this paper is to concentrate on some statements on self-adjoint operators. We give a more general Stone lemma and formulate an equivalent condition for self-adjointness of a linear operator on a Hilbert space. We also give the generalization of these theorems for linear relations.\\
	
	The structure of the paper is the following. In Section 2 we remind the classical Stone lemma and von Neumann Theorem and give the reversed von Neumann Theorem and a generalization of the classical Theorem. Theorem \ref{new1} and \ref{new2} are new results.
	In Section 3 we give an introduction to the notion  of linear relations and its properties, and generalize the main Theorems of Section 2 for linear relations. Here Theorem \ref{new3} and \ref{new4} are new. \\
	
	\textit{Acknowledgement} We are grateful for Zoltán Sebestyén for his great research seminars and suggestions.

	\section{Classical Stone lemma and Von Neumann theorem and their generalizations}

	In this section we recall the classical Stone lemma and von Neumann Theorem about self-adjoint operators. Both of them have generalizations and the von Neumann Theorem even has a reverse statement. In the last part we give an equivalent condition for self-adjointness without the explicit statement of the symmetry. \\
	
	First let us recall the definition of the graph of an operator. Let $\mathbb{H}$ a Hilbert space, $T$ a linear operator on $\mathbb{H}$.
	
	\begin{definition}
		We define the \textit{graph} of the operator $T$ as: $$G(T)=\{\{f,Tf\}:f\in dom(T)  \}\text{,}$$
		
		and the adjoint of the graph as:
		$$G(T)^*=\{\{f,f'\}: \{f,f'\}\in \mathbb{H}\times\mathbb{H}, \langle f'h\rangle = \langle f,Th\rangle \text{ for all }\{h,Th\}\in G(T) \}\text{.}$$
		
		Obviously, if $T^*$ exists, then $G(T^*)=G(T)^*$.
		
	\end{definition}
	
	\begin{lemma}
		(Stone lemma) Let $S:\mathbb{H}\to\mathbb{H}$ a symmetric linear operator (i. e. $G(S)\subset G(S^*)$). If $S$ has  full range (i. e. $ran(S)=\mathbb{H}$) then the adjoint $S^*$ exists and $S=S^*$. (see \cite{stone})
	\end{lemma}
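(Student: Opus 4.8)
The plan is to derive both inclusions $S\subseteq S^{*}$ and $S^{*}\subseteq S$ from a single observation: since $ran(S)=\mathbb{H}$, a vector orthogonal to $ran(S)$ must vanish, and the symmetry of $S$ lets us push the relevant vectors into $ran(S)^{\perp}$.

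First I would check that $S$ is densely defined, so that $S^{*}$ is meaningful at all (the statement names $S^{*}$ but does not assume density). Let $u\perp dom(S)$. Because $ran(S)=\mathbb{H}$, choose $w\in dom(S)$ with $Sw=u$; then for every $g\in dom(S)$ symmetry gives $\langle w,Sg\rangle=\langle Sw,g\rangle=\langle u,g\rangle=0$, so $w\perp ran(S)=\mathbb{H}$, whence $w=0$ and $u=Sw=0$. Thus $dom(S)$ is dense, $S^{*}$ exists, and the symmetry hypothesis is exactly $S\subseteq S^{*}$.

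For the reverse inclusion, let $\{f,f'\}\in G(S^{*})$, i.e. $\langle f,Sg\rangle=\langle f',g\rangle$ for all $g\in dom(S)$. Using $ran(S)=\mathbb{H}$ once more, pick $h\in dom(S)$ with $Sh=f'$. Then for every $g\in dom(S)$, symmetry of $S$ gives $\langle h,Sg\rangle=\langle Sh,g\rangle=\langle f',g\rangle=\langle f,Sg\rangle$, so $f-h\perp ran(S)=\mathbb{H}$ and therefore $f=h\in dom(S)$ with $S^{*}f=f'=Sh=Sf$. Hence $S^{*}\subseteq S$, and together with the previous inclusion $S=S^{*}$.

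The only step that is not purely mechanical is the first one: one has to notice that density of $dom(S)$ is not part of the hypothesis but must be extracted from $ran(S)=\mathbb{H}$ together with symmetry before the adjoint can even be formed; after that, both inclusions are the same one-line orthogonality-to-$ran(S)$ argument. An alternative route would be to observe that $S$ is injective and that $S^{-1}$ is an everywhere-defined symmetric operator, hence bounded and self-adjoint by Hellinger--Toeplitz, and then transfer self-adjointness back through $S=(S^{-1})^{-1}$ using $(S^{-1})^{*}=(S^{*})^{-1}$; this is quicker to state but relies on the adjoint-of-inverse identity.
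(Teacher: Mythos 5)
Your proof is correct and follows essentially the same route as the paper's: first establish density of $dom(S)$ from surjectivity plus symmetry so that $S^{*}$ exists, then use $ran(S)=\mathbb{H}$ to pull $S^{*}f$ back to some $Sh$ and show $f=h\in dom(S)$. The only cosmetic difference is that where you conclude a vector vanishes because it is orthogonal to $ran(S)=\mathbb{H}$, the paper invokes surjectivity a second time to write that vector as $Sv$ and computes its norm directly; the underlying mechanism is identical.
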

	\begin{proof}
		Let $g\in (dom(S))^\bot$ such that $g=Su$ and $u=Sv$. Then 
		$$\langle u,u\rangle = \langle u,Sv\rangle = \langle Su,v\rangle = \langle g,v\rangle = 0$$
		so $u=0$ and $g=0$. Therefore $S$ is densely defined and there exists the adjoint $S^*$ and it extends $S$ i. e. $S\subset S^*$.\\
		
		Let $g\in dom(S^*)$ such that $S^*g=Sv$ and $Sv=g-u$. Then 
		\begin{align*}
			\langle g-u,g-u\rangle &= \langle g-u,Sv\rangle = \langle g,Sv\rangle -\langle u,Sv\rangle  \\
			&=\langle S^*g,v\rangle -\langle Su,v\rangle = \langle Su,v\rangle -\langle Su,v\rangle = 0
		\end{align*}
		
		Then $g-u=0$ so $g=u$ and $g\in dom(S)$.\\
		
		As a consequence $S$ is self-adjoint.
	\end{proof}

	We can generalise the Stone lemma without the explicit useage of symmetry as follows:
	
	\begin{theorem}
		\label{new1}
		Let $S:\mathbb{H}\to\mathbb{H}$ be a linear operator such that $ran(G(S)\cap G(S)^*)=\mathbb{H}$. Then the adjoint $S^*$ exists and $S=S^*$.
	\end{theorem}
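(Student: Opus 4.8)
The plan is to recognise $G(S)\cap G(S)^{*}$ as the graph of a symmetric operator, apply the Stone lemma to that operator, and then show it is already equal to $S$.

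First I would unwind the definitions. A pair $\{f,f'\}$ lies in $G(S)\cap G(S)^{*}$ exactly when $f\in dom(S)$, $f'=Sf$, and moreover $\langle Sf,h\rangle=\langle f,Sh\rangle$ for every $h\in dom(S)$. Hence $G(S)\cap G(S)^{*}=G(S_{0})$, where $S_{0}$ is the restriction of $S$ to
$$D_{0}=\{\,f\in dom(S):\langle Sf,h\rangle=\langle f,Sh\rangle\ \text{for all }h\in dom(S)\,\}.$$
Since $D_{0}\subseteq dom(S)$, the defining identity yields $\langle S_{0}f,g\rangle=\langle f,S_{0}g\rangle$ for all $f,g\in D_{0}$, so $S_{0}$ is symmetric. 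The hypothesis $ran(G(S)\cap G(S)^{*})=\mathbb{H}$ says precisely that $ran(S_{0})=\mathbb{H}$, so the Stone lemma applies to $S_{0}$: it is densely defined and $S_{0}=S_{0}^{*}$. In particular $dom(S)\supseteq D_{0}$ is dense, so $S^{*}$ exists.

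It remains to upgrade self-adjointness of $S_{0}$ to that of $S$, for which I would prove $S=S_{0}$, i.e.\ $dom(S)=D_{0}$. The decisive point is that $S$ is injective: if $Sk=0$, then by $ran(S_{0})=\mathbb{H}$ there is $w\in D_{0}$ with $S_{0}w=k$, and testing the $D_{0}$-identity for $w$ against $h=k\in dom(S)$ gives $\|k\|^{2}=\langle Sw,k\rangle=\langle w,Sk\rangle=0$, so $k=0$. Now let $g\in dom(S)$ be arbitrary; again choose $u\in D_{0}$ with $S_{0}u=Sg$, so that $S(g-u)=0$ and hence $g=u\in D_{0}$. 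Therefore $dom(S)=D_{0}$ and $S=S_{0}=S_{0}^{*}=S^{*}$.

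The bookkeeping — identifying $G(S)\cap G(S)^{*}$, checking that $S_{0}$ is symmetric, quoting the Stone lemma — is routine; the one observation that carries the argument is that the range hypothesis forces $\ker S=\{0\}$, which is exactly what prevents $S$ from being a proper extension of the self-adjoint operator $S_{0}$. If one prefers not to invoke the Stone lemma as a black box, the same two facts (injectivity of $S_{0}$ together with $ran(S_{0})=\mathbb{H}$) can be inserted directly into the computation in its proof above to obtain density of the domain and $S_{0}=S_{0}^{*}$ in one stroke.
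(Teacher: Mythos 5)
Your proof is correct and follows essentially the same route as the paper's: both identify $G(S)\cap G(S)^{*}$ as the graph of a symmetric restriction of $S$ with full range and then invoke the Stone lemma. The one substantive difference is that the paper merely asserts the final passage from self-adjointness of that restriction to $S=S^{*}$, whereas you justify it by observing that $ran(G(S)\cap G(S)^{*})=\mathbb{H}$ forces $\ker S=\{0\}$ and hence $dom(S)=D_{0}$, so $S$ cannot properly extend the self-adjoint restriction; this closes a step the paper leaves implicit.
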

	
	\begin{proof}
		Let $T$ be an appropriate operator such that $G(T)=G(S)\cap G(S)^*$. Then $T\subset S$ and $S^*\subset T^*$. \\
		
		If $h,g\in dom(T)$, then $\{g,Tg\}$ and $\{h,Th\}\in (G(S)\cap G(S)^*)$. Therefore $Tg=Sg$, $\{g,Sg\}\in G(S)^*$, $Th=Sh$ and $\{h,Sh\}\in G(S)^*$.\\
		
		Now we show that $T$ is symmetric: 
		$$\langle Tg,h\rangle = \langle Sg,h\rangle = \langle g,Sh\rangle =\langle g,Th\rangle \text{.} $$
		
		Since $T$ is symmetric and has full range, we can apply the Stone lemma for the operator $T$ and we get that $T$ is self-adjoint. As a consequence, $S^*$ exists and $S=S^*$.
	\end{proof}
	
	\begin{theorem} (Classical von Neumann Theorem \cite{NeumannAlapCikk})
		If $T$ is densely defined and closed operator on $\mathbb{H}$, then $T^*T$ is self-adjoint operator on $\mathbb{H}$.
	\end{theorem}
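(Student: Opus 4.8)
The plan is to follow the classical route via the orthogonal decomposition of the product space $\mathbb{H}\times\mathbb{H}$, and then to invoke the Stone lemma so as to avoid verifying self-adjointness directly. Introduce the ``flip'' unitary $V\colon\mathbb{H}\times\mathbb{H}\to\mathbb{H}\times\mathbb{H}$, $V\{h,k\}=\{-k,h\}$, and note $V^{2}=-\mathrm{Id}$. The first step is to record the identity $G(T)^{*}=\bigl(V\,G(T)\bigr)^{\perp}$, which is merely the rewriting of the defining condition $\langle f',h\rangle=\langle f,Th\rangle$ as an orthogonality relation in $\mathbb{H}\times\mathbb{H}$. Since $T$ is densely defined, $T^{*}$ exists and $G(T^{*})=G(T)^{*}$; since $T$ is closed, $G(T)$ is a closed subspace, so $G(T)^{\perp\perp}=G(T)$. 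Combining these with unitarity of $V$ yields the decomposition $\mathbb{H}\times\mathbb{H}=G(T)\oplus V\,G(T^{*})$.

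Second, I would feed a vector of the form $\{f,0\}$ into this decomposition: write $\{f,0\}=\{u,Tu\}+V\{v,T^{*}v\}=\{u,Tu\}+\{-T^{*}v,v\}$ with $u\in\mathrm{dom}(T)$ and $v\in\mathrm{dom}(T^{*})$. Comparing the two coordinates gives $v=-Tu$ and $f=u+T^{*}Tu$; in particular $Tu\in\mathrm{dom}(T^{*})$, hence $u\in\mathrm{dom}(T^{*}T)$ and $(\mathrm{Id}+T^{*}T)u=f$. As $f\in\mathbb{H}$ was arbitrary, this shows $\mathrm{ran}(\mathrm{Id}+T^{*}T)=\mathbb{H}$.

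Third, a one-line computation shows that $T^{*}T$ is symmetric, indeed nonnegative: for $u,v\in\mathrm{dom}(T^{*}T)$ one has $\langle T^{*}Tu,v\rangle=\langle Tu,Tv\rangle=\langle u,T^{*}Tv\rangle$, and $\langle T^{*}Tu,u\rangle=\|Tu\|^{2}\ge 0$. Hence $S:=\mathrm{Id}+T^{*}T$ is symmetric and has full range, so the Stone lemma applies to $S$: it is densely defined and self-adjoint. Consequently $T^{*}T=S-\mathrm{Id}$ is a densely defined self-adjoint operator. (Alternatively, since here $G(S)\subset G(S)^{*}$ and $\mathrm{ran}\,G(S)=\mathbb{H}$, one may invoke Theorem \ref{new1} directly.)

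I expect the main obstacle to be the careful justification of the decomposition $\mathbb{H}\times\mathbb{H}=G(T)\oplus V\,G(T^{*})$: closedness of $T$ is exactly what provides $G(T)^{\perp\perp}=G(T)$, and density of $\mathrm{dom}(T)$ is exactly what makes $G(T^{*})$ available at all, so both hypotheses of the theorem enter precisely here. Once this is established, the remainder is coordinate bookkeeping together with the symmetry/nonnegativity computation and a single appeal to the Stone lemma — whose proof, as reproduced above, itself derives density of the domain from full range together with symmetry, so no separate argument for density of $\mathrm{dom}(T^{*}T)$ is required.
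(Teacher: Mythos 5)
Your argument is correct, and it is worth noting that the paper itself offers no proof of this theorem --- it is stated as a classical result with a citation to von Neumann's original article --- so there is no in-paper argument to compare against. What you give is essentially von Neumann's own proof: the identity $G(T)^{*}=(V\,G(T))^{\perp}$ is exactly the paper's definition of the adjoint graph rewritten as orthogonality, unitarity of $V$ together with $V^{2}=-\mathrm{Id}$ gives $V\,G(T^{*})=G(T)^{\perp}$, and closedness of $G(T)$ then yields $\mathbb{H}\times\mathbb{H}=G(T)\oplus V\,G(T^{*})$; decomposing $\{f,0\}$ correctly produces $\mathrm{ran}(\mathrm{Id}+T^{*}T)=\mathbb{H}$, and the symmetry/nonnegativity computation is sound. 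Your closing move --- applying the paper's Stone lemma (or equivalently Theorem \ref{new1}, since $G(S)\subset G(S)^{*}$ forces $G(S)\cap G(S)^{*}=G(S)$) to $S=\mathrm{Id}+T^{*}T$ --- is a nice way to package the last step: it delegates both the density of $\mathrm{dom}(T^{*}T)$ and its self-adjointness to a single surjectivity criterion, whereas the textbook route would verify density of $\mathrm{dom}(T^{*}T)$ separately and then argue that a nonnegative symmetric operator with $\mathrm{ran}(\mathrm{Id}+S')=\mathbb{H}$ is self-adjoint. One pedantic point: the Stone lemma as stated in the paper phrases symmetry as $G(S)\subset G(S^{*})$, which presupposes a densely defined $S$; its proof, however, only uses the form identity $\langle Sx,y\rangle=\langle x,Sy\rangle$ and derives density from surjectivity, which is exactly the reading your application needs, so the appeal is legitimate.
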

	
	\begin{theorem}
		(Reversed von Neumann Theorem)\cite{reversedVonNeumann} Let $\mathbb{H}$ and $\mathbb{K}$ be both real or complex Hilbert spaces and $T:\mathbb{H}\to\mathbb{K}$  a densely defined linear operator. Assume that the operators $T^*T$ and $TT^*$ on the corresponding Hilbert spaces $\mathbb{H}$ and $\mathbb{K}$, respectively, are self-adjoint. Then $T$ is necessarily closed. 
	\end{theorem}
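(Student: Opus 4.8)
The plan is to prove that $T$ equals its double adjoint $T^{**}$. First I would note that the hypotheses already force $T$ to be closable: since $TT^*$ is self-adjoint it is densely defined, and from $dom(TT^*)\subseteq dom(T^*)$ it follows that $T^*$ is densely defined; hence $\overline{T}:=T^{**}$ exists, is closed and densely defined, satisfies $T\subseteq\overline{T}$, and $(\overline{T})^*=T^*$. Since $T\subseteq\overline{T}$ always holds, the entire task reduces to proving the reverse inclusion of domains, $dom(\overline{T})\subseteq dom(T)$.

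The second step is to pin down the two ``square'' operators. Applying the classical von Neumann theorem (in its version for operators between two Hilbert spaces) to the closed densely defined operator $\overline{T}$ shows that $\overline{T}^*\overline{T}=T^*\overline{T}$ is self-adjoint. From $T\subseteq\overline{T}$ one has the trivial operator inclusion $T^*T\subseteq T^*\overline{T}$, where the smaller operator $T^*T$ is self-adjoint by hypothesis and the larger one is symmetric; as a self-adjoint operator has no proper symmetric extension, this forces $T^*T=\overline{T}^*\overline{T}$, and in particular $dom(T^*T)=dom(\overline{T}^*\overline{T})$. Separately, since $TT^*$ is self-adjoint and nonnegative (it is symmetric and $\langle TT^*u,u\rangle=\|T^*u\|^2\ge 0$ on its domain), the operator $I+TT^*$ is a bijection of $\mathbb{K}$ with everywhere-defined bounded inverse.

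The core of the argument is then a splitting of an arbitrary $x\in dom(\overline{T})$. Put $k:=\overline{T}x$ and $y:=(I+TT^*)^{-1}k\in dom(TT^*)$, so that $y+T(T^*y)=k$; setting $z:=T^*y$ we get $z\in dom(T)$ and $Tz=k-y$. Because $T\subseteq\overline{T}$, this reads $\overline{T}z=\overline{T}x-y$, hence $\overline{T}(x-z)=y$. Now $y\in dom(TT^*)\subseteq dom(T^*)=dom(\overline{T}^*)$, so $x-z\in dom(\overline{T}^*\overline{T})=dom(T^*T)\subseteq dom(T)$; therefore $x=z+(x-z)\in dom(T)$. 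This gives $dom(\overline{T})\subseteq dom(T)$, so $T=\overline{T}$ is closed.

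I expect the main obstacle to be precisely this last step: recognizing that the component of $x$ captured by $(I+TT^*)^{-1}\overline{T}x$ can be extracted through $T^*$, leaving a remainder that automatically lies in $dom(\overline{T}^*\overline{T})$ — together with the identification $T^*T=\overline{T}^*\overline{T}$, which is what makes that remainder land in $dom(T)$ and which genuinely uses both self-adjointness hypotheses (through the maximality of self-adjoint operators among symmetric ones, and through invertibility of $I+TT^*$). The remaining ingredients — closability, the bounded inverse of $I+TT^*$, and the two inclusions — should be routine.
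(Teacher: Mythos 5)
The paper offers no proof of this theorem: it is quoted verbatim from \cite{reversedVonNeumann}, so there is nothing internal to compare against, and your proposal has to stand on its own. It does. Every step checks: self-adjointness of $TT^*$ forces it to be densely defined, hence $dom(TT^*)\subseteq dom(T^*)$ makes $T^*$ densely defined, so $\overline{T}=T^{**}$ exists with $(\overline{T})^*=T^*$; the inclusion $T^*T\subseteq \overline{T}^{\,*}\overline{T}$ of the hypothesized self-adjoint operator into the (von Neumann) self-adjoint operator collapses to equality by maximality of self-adjoint operators among their symmetric extensions; the computation $\langle TT^*u,u\rangle=\|T^*u\|^2\ge 0$ legitimately yields the everywhere-defined bounded inverse $(I+TT^*)^{-1}$; and the splitting $x=z+(x-z)$ with $z=T^*(I+TT^*)^{-1}\overline{T}x$ puts $z\in dom(T)$ directly and $x-z\in dom(\overline{T}^{\,*}\overline{T})=dom(T^*T)\subseteq dom(T)$, giving $dom(\overline{T})\subseteq dom(T)$ and hence $T=\overline{T}$. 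The only cosmetic remark is that you only need the larger operator $\overline{T}^{\,*}\overline{T}$ to be symmetric for the maximality argument, though it is in fact self-adjoint; this is the same circle of ideas as in the cited source, and your write-up is complete and correct.
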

	
	\begin{theorem}
		(Revised (more general) von Neumann theorem) \cite{reversedVonNeumann} Let $T$ be a not necessarily densely defined symmetric linear operator on a Hilbert space $\mathbb{H}$ with full range i. e. $ran(T)=\mathbb{H}$. Then $T$ is automatically self-adjoint (and clearly densely defined as well). 
	\end{theorem}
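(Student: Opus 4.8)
The statement is, after unwinding definitions, the Stone lemma stated correctly for operators that are not assumed densely defined in advance: here ``symmetric'' can only mean the elementary relation $\langle Tf,g\rangle=\langle f,Tg\rangle$ for all $f,g\in dom(T)$, since the equivalent formulation $G(T)\subset G(T^*)$ already presupposes that $T^*$ exists, i.e.\ that $dom(T)$ is dense. So the plan is to check that elementary symmetry together with $ran(T)=\mathbb{H}$ feeds straight into the machinery of the present section, and in particular into Theorem~\ref{new1}, rather than to re-invent anything.

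Concretely, the quickest route I would take is this. First observe that elementary symmetry of $T$ is precisely the assertion $G(T)\subseteq G(T)^*$: for $\{g,Tg\}\in G(T)$, the membership $\{g,Tg\}\in G(T)^*$ unravels, by the definition of $G(T)^*$, to $\langle Tg,h\rangle=\langle g,Th\rangle$ for every $\{h,Th\}\in G(T)$, i.e.\ for every $h\in dom(T)$, which is exactly the symmetry hypothesis. Hence $G(T)\cap G(T)^*=G(T)$, so $ran\bigl(G(T)\cap G(T)^*\bigr)=ran(T)=\mathbb{H}$, and Theorem~\ref{new1} applied to $S:=T$ yields that $T^*$ exists and $T=T^*$; being self-adjoint, $T$ is in particular densely defined. (Alternatively, one can argue directly, imitating the proof of the Stone lemma: for $g\perp dom(T)$ pick $u,v\in dom(T)$ with $Tu=g$ and $Tv=u$, so that $\langle u,u\rangle=\langle u,Tv\rangle=\langle Tu,v\rangle=\langle g,v\rangle=0$, whence $g=Tu=0$ and $dom(T)$ is dense; then $T^*$ exists, $T\subset T^*$ by symmetry, and the second half of the Stone argument gives $T^*\subset T$.)

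I do not expect a genuine obstacle. The only point that wants a moment's care is the observation opening the previous paragraph — that the elementary form of symmetry, unlike the $G(T)\subset G(T^*)$ form, already makes sense and does all the required work \emph{before} density is known — after which the theorem is an immediate corollary of Theorem~\ref{new1}. A small sanity check worth recording while writing the proof: $T$ is assumed single-valued, and $T=T^*$ then forces $T^*$ single-valued as well, which is consistent with $dom(T)$ being dense.
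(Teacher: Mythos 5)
Your argument is correct, but a remark on how it sits relative to the paper: the paper gives no proof of this theorem at all --- it is quoted from \cite{reversedVonNeumann} --- and the statement is, as you observe, nothing but the Stone lemma from the top of the section with the density hypothesis converted into a conclusion. Your parenthetical ``alternative'' argument (pick $u,v$ with $Tu=g$, $Tv=u$ for $g\perp dom(T)$, compute $\langle u,u\rangle=\langle g,v\rangle=0$, then run the second half to get $T^*\subset T$) is essentially verbatim the paper's own proof of the Stone lemma, so that is the closest thing to ``the paper's proof'' of this statement. Your primary route through Theorem~\ref{new1} is also valid --- symmetry gives $G(T)\subseteq G(T)^*$, hence $G(T)\cap G(T)^*=G(T)$ and $ran\bigl(G(T)\cap G(T)^*\bigr)=ran(T)=\mathbb{H}$ --- but note that it is a detour rather than a shortcut: Theorem~\ref{new1} is itself proved by reducing to the Stone lemma, so invoking it here just traverses the circle Stone $\Rightarrow$ Theorem~\ref{new1} $\Rightarrow$ present theorem and lands back at the Stone lemma. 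There is no logical circularity, since the Stone lemma is established independently first, but nothing is gained over simply identifying the two statements. Your opening observation is the genuinely valuable part: the paper states the Stone lemma with symmetry in the form $G(S)\subset G(S^*)$, which presupposes that $S^*$ exists, i.e.\ that $dom(S)$ is dense --- exactly what the present theorem is supposed to conclude. Reading symmetry in the elementary form $\langle Tf,g\rangle=\langle f,Tg\rangle$ (equivalently $G(T)\subseteq G(T)^*$ with the paper's definition of $G(T)^*$, which does not require density) is what makes the statement non-vacuous, and the paper's own proof of the Stone lemma in fact only ever uses symmetry in this elementary form. So your proof is correct and your one ``point of care'' is precisely the right one.
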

	
	The next Theorem characterizes a densely defined operator with the intersection of its graph and the adjoint of its graph.

	\begin{theorem}
		\label{new2}
		Let $S$ be a positive linear operator on the Hilbert space  $\mathbb{H}$. Then the following are equivalent:
		\begin{enumerate}
			\item \label{op1}$dom(S)^\perp={0}$ and $S^* =S$
			\item \label{op2}$\{k+l:\{k,l \}\in G(S)\cap G(S)^*    \}=\mathbb{H}$ 
		\end{enumerate} 
	\end{theorem}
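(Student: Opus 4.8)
The plan is to handle the two implications separately, in each case passing to the operator $T$ determined by $G(T)=G(S)\cap G(S)^*$, exactly as in the proof of Theorem \ref{new1}. As noted there, this intersection really is the graph of a single-valued operator (because $G(S)$ is), $T\subseteq S$, and the short computation $\langle Tg,h\rangle=\langle Sg,h\rangle=\langle g,Sh\rangle=\langle g,Th\rangle$ shows $T$ is symmetric; moreover $T$ is positive, since $\langle Tg,g\rangle=\langle Sg,g\rangle\geq 0$ for $g\in dom(T)\subseteq dom(S)$. I would also record at the outset the key translation: condition (\ref{op2}) says exactly that $ran(I+T)=\mathbb{H}$, since $\{k,l\}\in G(T)$ means $l=Tk$.

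For $(\ref{op1})\Rightarrow(\ref{op2})$: if $dom(S)^\perp=\{0\}$ and $S^*=S$, then $G(S)=G(S)^*$, so $G(S)\cap G(S)^*=G(S)$ and the set in (\ref{op2}) is just $ran(I+S)$. Positivity gives $\|(I+S)x\|\geq\|x\|$, so $I+S$ is bounded below and hence has closed range; and $ran(I+S)^\perp=\ker(I+S^*)=\ker(I+S)=\{0\}$, again because $Sw=-w$ forces $\langle Sw,w\rangle=-\|w\|^2=0$. A closed dense subspace is everything, so $ran(I+S)=\mathbb{H}$.

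For $(\ref{op2})\Rightarrow(\ref{op1})$: now $I+T$ is a symmetric operator on $\mathbb{H}$ with full range, so the revised von Neumann theorem applies to $I+T$ and yields that $I+T$ is self-adjoint and densely defined; since $I$ is bounded and everywhere defined, this gives $T^*=T$ and $dom(T)^\perp=\{0\}$. What is left is to promote this from $T$ to $S$, i.e. to prove $dom(S)\subseteq dom(T)$ (the reverse inclusion is automatic). Given $f\in dom(S)$, I would use $ran(I+T)=\mathbb{H}$ to choose $u\in dom(T)$ with $u+Tu=f+Sf$; then $w:=u-f$ lies in $dom(S)$ and satisfies $Sw=-w$, so positivity of $S$ forces $w=0$, i.e. $f=u\in dom(T)$. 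Hence $G(S)=G(T)$, so $S=T=T^*=S^*$ and $dom(S)^\perp=dom(T)^\perp=\{0\}$.

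The main obstacle is precisely that last inclusion $dom(S)\subseteq dom(T)$: the auxiliary operator $T$ is visibly well-behaved, but transferring its self-adjointness back to $S$ is exactly the place where positivity of $S$ is used in an essential way, through the observation that the equation $Sw=-w$ has no nonzero solution. Everything else is bookkeeping with the definitions of $G(S)$ and $G(S)^*$ together with a direct appeal to the already-quoted revised von Neumann theorem (applied to the shift $I+T$).
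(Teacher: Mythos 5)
Your proof is correct, but it takes a noticeably different route from the paper's in the direction $(\ref{op2})\Rightarrow(\ref{op1})$. The paper never isolates the auxiliary operator $T$ with $G(T)=G(S)\cap G(S)^*$ in this theorem; instead it argues directly with $S$ and $S^*$ in three steps: first it takes $g\in dom(S)^\perp$, writes $g=k+Sk$ with $\{k,Sk\}\in G(S)\cap G(S)^*$ and uses positivity to force $k=0$ (so $S^*$ exists); then for $g\in dom(S^*)$ it solves $g+S^*g=k+Sk$ and shows $g-k\in\ker(I+S^*)=(ran(I+S))^\perp=\{0\}$, giving $S^*\subseteq S$; finally for $g\in dom(S)$ it repeats the positivity argument to get $S\subseteq S^*$. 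You instead import the machinery of Theorem \ref{new1}: you establish that $T$ is symmetric and positive, observe that condition (\ref{op2}) says $ran(I+T)=\mathbb{H}$, invoke the revised von Neumann theorem for the shifted operator $I+T$ to conclude $T=T^*$ with dense domain, and then reduce everything to the single positivity step $Sw=-w\Rightarrow w=0$ to prove $dom(S)\subseteq dom(T)$, hence $S=T$. Your version is shorter and makes the logical dependence on the quoted Stone/von Neumann result explicit, at the price of being less self-contained; the paper's version is elementary and never needs the identity $(I+T)^*=I+T^*$. Your $(\ref{op1})\Rightarrow(\ref{op2})$ direction is actually more complete than the paper's, which simply asserts $ran(I+S)=\mathbb{H}$, whereas you supply the standard closed-range-plus-trivial-orthocomplement argument (note only that ``bounded below implies closed range'' uses that $S$, being self-adjoint, is a closed operator, which you should state). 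The remaining transfer step $dom(S)\subseteq dom(T)$, which you correctly identify as the crux, is in substance the same positivity computation as the paper's step (c), so the two proofs ultimately rest on the same mechanism.
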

	\begin{proof}
		\text{ }\\
		
		(\ref{op1})$\to $(\ref{op2}):
		
		$\{ k+l:\{ k+l\}\in G(S)  \}=\{k+Sk:k\in dom(S)\}=ran(I+S)=\mathbb{H}$.\\
		
		(\ref{op2})$\to $(\ref{op1}):\\
		
	    We will prove this direction in 3 steps.
		\begin{enumerate}[label=(\alph*)]
			\item $g\in dom(S)^\perp$:\\
			
			In this case we know that  $\{0,g\}\in G(S^*)$ and there exists $\{k,l\}\in G(S)\cap G(S^*)$ such that $g=k+l$. Therefore $l=Sk$, $g=k+Sk$ and  $\langle Sh,k\rangle =\langle h,l\rangle$ for all $h\in dom(S)$. As a consequence, 
			$$0=\langle g,k\rangle = \langle k+Sk,k\rangle = \langle k,k\rangle + \langle Sk, k\rangle \text{.}$$
			Moreover  $\langle Sk,k\rangle \geq 0$ and $\langle k,k\rangle \geq 0$, so $k=0$ and $g=0$. 
			
			Finally we get that $dom(S)^\perp=\{0\}$ and there exists $S^*$ and  $G(S)^*=G(S^*)$.\\
			
			\item $g\in dom(S^*)$:\\
			
			In this case there exists $\{k,l\}\in G(S)\cap G(S^*)$ such that $g+S^*g = k + l$, where $l=Sk=S^*k$, therefore $g+S^*g=k+S^*k$. As a consequence we have  $(g-k)+ S^*(g-k)=0$, so $(g-k)\in ker(I+S^*)=(ker(I+S))^\perp = (ran(I+S))^\perp = \{0\}$. 
			
			Finally $g=k$, $g+S^*g=k+Sk=g+Sg$, that is $S^*g=Sg$, so $S^*\subseteq S$.\\
			
			\item $g\in dom(S)$:\\
			
			In this case there exists $\{k,l\}\in G(S)\cap G(S^*)$ such that $g+Sg=k+l$, where $l=Sk=S^*k$, therefore $g+Sg=k+Sk$. As a consequence, $(g-k)+S(g-k)=0$, so $(g-k)\in ker(I+S)$ and 
			\begin{align*}
				&\langle (g-k)+S(g-k),g-k\rangle =\\
				&= \langle g-k,g-k\rangle + \langle S(g-k),g-k\rangle =0\text{.}
			\end{align*}
			
			We also know the fact that $\langle g-k,g-k\rangle \geq 0$ and $\langle S(g-k),g-k\rangle \geq 0$. Hence $g-k=0$ so $g=k$ and $g+Sg=k+S^*k=g+S^*g$, that is $Sg=S^*g$ thus $S\subseteq S^*$.\\
			
		\end{enumerate}
		
		In summary, with the help of (a), (b) and (c) we proved that the second condition implies that S is densely defined and self-adjoint.
				
		As the second condition is the trivial consequence of the first one, we proved that the two conditions are equivalent.
	\end{proof}
	
	\section{Linear relations}\label{relaciok}
	
	In this section we give a short introduction to the topic of linear relations. Linear relations are natural representational forms for multivalued operators. First let us show the classical definition of linear relations on a Hilbert space.
	
	\begin{definition}
		Let $\mathbb{H}$ be a Hilbert space. We call $T$ a linear relation on $\mathbb{H}$ if  $T$ is a linear subspace of $\mathbb{H}\times\mathbb{H}$. 
		
		Define the domain, range, kernel and multi-value of $T$ as follows:
		\begin{itemize}
			\item $dom(T)=\{f\in \mathbb{H}:\{f,f'\}\in T  \}$,
			\item $ran(T)=\{f'\in \mathbb{H}:\{f,f'\}\in T  \}$,
			\item $ker(T)=\{f\in \mathbb{H}:\{f,0\}\in T  \}$,
			\item $mul(T)=\{f'\in \mathbb{H}:\{0,f'\}\in T  \}$.
		\end{itemize}
	\end{definition}
	
	The previous definition gives the usual characteristics of the linear relation $T$. However, the next definition treats $T$ from a different point of view.

	\begin{definition}
		Let $\mathbb{H}$ be a Hilbert space,  $T$ a linear relation on $\mathbb{H}$ and $x\in\mathbb{H}$. Then  $T(x)=\{y\in \mathbb{H}:  \{x,y \}\in T \}$. 
		In this case the domain, range, and multi-value of  $T$ can be defined as follows:
		\begin{itemize}
			\item $dom(T)=\{x\in\mathbb{H}:T(x) \text{ is not empty }  \}$,
			\item $ran(T)=\bigcup\limits_{x\in dom(T)} T(x)$,
			\item $mul(T)=T(0)$.
		\end{itemize}
	\end{definition}
	
	If $T(x)$ never contains more than one element then we call it single valued. In this case $T$ is the graph of an operator. \\
	
	The linearity of $T$ can be characterized by the following expression:
	$$\alpha T(x_1)+\beta T(x_2)\subseteq T(\alpha x_1 + \beta x_2) \text{\vspace{0.5cm}} \text{ for all } \alpha,\beta \text{ scalar and }x_1,x_2\in \mathbb{H}$$
	
	\begin{definition}
		Let $T$ and $S$ be two linear relations on $\mathbb{H}$. The sum of  $T$ and  $S$ is also a linear relation on $\mathbb{H}$:
		$$T+S=\{ \{ k,l+m \}\in\mathbb{H}\times \mathbb{H}: \{k,l\}\in T, \{k,m\}\in S  \}\text{.}$$
	\end{definition}

	Let us define the product of two linear relations as follows:
	
	\begin{definition}
		Let $T$ and $S$ be two linear relations $\mathbb{H}$. Then the product of the linear relations $S$ and $T$ is also a linear relation on $\mathbb{H}$:
		$$\{ \{k,l\}\in \mathbb{H}\times\mathbb{H}: \{k,x\}\in T, \{x,l\}\in S \text{ for some }x\in\mathbb{H}   \}\text{.}$$
	\end{definition}

	Now we give the definition of the adjoint of a linear relation. 
	
	\begin{definition}
		Let $\mathbb{H}$ be a Hilbert space and $T$ a linear relation on  $\mathbb{H}$. Then the adjoint of $T$ denoted by $T^*$ is also a linear relation on $\mathbb{H}$ and is defined as follows:
		$$T^*=\{ \{f,f'\}\in \mathbb{H}\times\mathbb{H}: \langle f',h\rangle=\langle f,h'\rangle \text{ }\forall \{h,h'\}\in T    \}\text{.}$$
	\end{definition}
	
	It is important to mention that if $T$ is an operator, we can only define the adjoint if it is densely defined. However, if $T$ is a linear relation, we can always define the adjoint.
	
	The notion of self-adjointness and positivity is nearly the same as in the operator case, however the definition of symmetry is a bit different. 
	
	\begin{definition}
		Let $\mathbb{H}$ be a Hilbert space and $T$ be a linear relation on $\mathbb{H}$. Then:
		\begin{enumerate}
			\item We call $T$ \textit{self-adjoint} if $T=T^*$.
			\item We call  $T$ \textit{positive} if $\langle f',f\rangle \geq 0$ for all $\{f,f'\}\in T$. 
			\item We call $T$ \textit{symmetric} if $T\subseteq T^*$.
		\end{enumerate}
	\end{definition}

	\begin{definition}
		Let $\mathbb{H}$ be a Hilbert space and $T$ a linear relation on $\mathbb{H}$. Let the inner product be linear in the first variable and conjugate linear in the second entry. (i. e. $\langle \lambda x, y\rangle= \lambda\langle x,y\rangle $ és  $\langle x,\lambda y\rangle= \overline{\lambda}\langle x,y\rangle $).
		
		Then we can  define the \textit{inner product} on $T$ with the use of inner product on $\mathbb{H}$:
		$$\langle\{k,l\},\{x,y\}\rangle =\langle k,l\rangle + \langle x,y\rangle \text{ \hspace{1cm}}\{k,l\},\{x,y\}\in T\text{.}$$
	\end{definition}
	
	\begin{definition}
		The \textit{inverse} of the linear relation $T$ can be defined as follows:
		$$T^{-1}\{ \{y,x\}\in \mathbb{H}\times \mathbb{H}: \{x,y\}\in T  \}\text{.}$$
	\end{definition}
	
	\begin{definition}
		We call the linear relation $T$ \textit{closed} if $T$ is a closed subspace of $\mathbb{H}\times\mathbb{H}$.
	\end{definition}
	
	\begin{theorem}
		Let $T$ be a positive self-adjoint linear relation. Then $ran(I+T)=\mathbb{H}$.
	\end{theorem}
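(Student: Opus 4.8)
The plan is to prove that the subspace $ran(I+T)=\{k+l:\{k,l\}\in T\}$ is simultaneously dense in $\mathbb{H}$ and closed, so that it must coincide with $\mathbb{H}$. Two facts will be used throughout: positivity of $T$, and the fact that a self-adjoint relation is closed — indeed $T^*=\bigcap_{\{h,h'\}\in T}\{\{f,f'\}:\langle f',h\rangle=\langle f,h'\rangle\}$ is an intersection of closed subspaces of $\mathbb{H}\times\mathbb{H}$, so $T=T^*$ is closed.

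First I would extract the quantitative content of positivity. If $\{k,0\}\in I+T$, then $\{k,-k\}\in T$, so $0\le\langle -k,k\rangle=-\|k\|^2$ and $k=0$; thus $ker(I+T)=\{0\}$, and in particular each $w\in ran(I+T)$ comes from a unique $\{k,l\}\in T$ with $w=k+l$. More generally, for any $\{k,l\}\in T$, positivity of $T$ together with the Cauchy--Schwarz inequality gives $\|k+l\|\,\|k\|\ge\langle k+l,k\rangle=\|k\|^2+\langle l,k\rangle\ge\|k\|^2$, hence $\|k\|\le\|k+l\|$. In other words, recovering the first coordinate $k$ from $w=k+l$ is a contractive map on $ran(I+T)$.

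For density, suppose $w\perp ran(I+T)$, i.e. $\langle w,k+l\rangle=0$ for all $\{k,l\}\in T$. Rewriting this as $\langle -w,k\rangle=\langle w,l\rangle$ for all $\{k,l\}\in T$ is precisely the assertion $\{w,-w\}\in T^*=T$. Then $\{w,0\}=\{w,w+(-w)\}\in I+T$, so $w\in ker(I+T)=\{0\}$; hence $ran(I+T)$ is dense.

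Finally, for closedness, let $w_n\in ran(I+T)$ with $w_n\to w$, and pick $k_n$ with $\{k_n,w_n-k_n\}\in T$. By linearity $\{k_n-k_m,(w_n-w_m)-(k_n-k_m)\}\in T$, and the contractivity estimate yields $\|k_n-k_m\|\le\|w_n-w_m\|$, so $(k_n)$ is Cauchy; letting $k_n\to k$ we get $w_n-k_n\to w-k$, and since $T$ is closed, $\{k,w-k\}\in T$, whence $w=k+(w-k)\in ran(I+T)$. Thus $ran(I+T)$ is closed and dense, so $ran(I+T)=\mathbb{H}$. The step I expect to require the most care is this last one: it is where both the closedness of $T$ and the positivity-derived contraction estimate are genuinely needed, replacing the bounded-operator-extension argument familiar from the operator setting.
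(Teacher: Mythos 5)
Your proof is correct and takes essentially the same route as the paper's: both establish that $ran(I+T)$ is dense by identifying its orthogonal complement with $ker(I+T^*)=ker(I+T)=\{0\}$ via positivity, and closed via the estimate $\|k\|\le\|k+l\|$ for $\{k,l\}\in T$ together with a Cauchy-sequence argument and the closedness of $T=T^*$. Your write-up is in fact somewhat more careful than the paper's (you make the use of Cauchy--Schwarz and of the closedness of adjoints explicit), but the underlying argument is the same.
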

	 This statement appeared in \cite{sandovici} without proof. The proof is new.
	\begin{proof}
		\text{ }\\
		
		Let us show that the following holds: $$\{0\}=ker(I+T)=ker(T+T^*)=ker\left[(I+T)^*\right]=\left[ran(I+T)\right]^\bot$$
		
		Consider the sequences $(f_n)$ and $(f_n')$ such that $\{f_n,f_n'\}\in T$ and $ (f_n+f_n')\to g\in \mathbb{H}$. 
		
		Then $\{f_n,f_n+f_n'\}\in I+T$ and
		
		$$\langle f_n,f_n+f_n' \rangle = \langle f_n,f_n \rangle + \langle f_n,f_n'\rangle  \geq \langle f_n,f_n\rangle $$\\
		
		Since $T$ is positive,
		
		\begin{align*}
			\langle f_n+f_n', f_n+f_n' \rangle &= \langle f_n,f_n\rangle + \langle f_n,f_n'\rangle + \langle f_n',f_n\rangle + \langle f_n',f_n'\rangle \\
			&\geq \langle f_n,f_n\rangle + \langle f_n',f_n'\rangle \geq \langle f_n,f_n\rangle
		\end{align*} 
		
		Let $f_m$ and $f_m'$ be other elements of the sequence above. Then
		
		$$\langle f_m-f_n,f_m-f_n \rangle \leq \left\langle (f_m+f_m')-(f_m'+f_n),(f_m+f_n')-(f_n+f_m')\right\rangle $$

		The right hand side of this inequality tends to zero, as $m$ and $n$ tends to infinity. Moreover, $f_n\to f$, $(f_n+f_n')\to g$. Therefore $\{f,g\}\in I+T$ so $f\in dom(T)=dom(I+T)$.
		
	\end{proof}

	Let us summarize the known results of linear relations on a Hilbert space $\mathbb{H}$.\\
	
	\begin{theorem}
		Let $\mathbb{H}$ be a Hilbert space and $T$ a linear relation on $\mathbb{H}$. Then the following hold:
		\begin{enumerate}
			\item $(dom(T))^\perp = mul(T^*)$, $(ran(T))^\perp = ker(T^*)$ \cite{sandovici}
			\item $T^*$ is a closed linear relation \cite{arens}
			\item $T^{**}$ is the closure of $T$ so $T\subseteq T^{**}$. Moreover, if $T$ is closed then $T=T^{**}$. \cite{arens}
			\item The linear relation $T$ is single valued (in this case it is a graph of an operator) if and only if $(dom(T))^\perp=\{0\}$. \cite{arens}
			\item If $T$ is a symmetric linear relation on $\mathbb{H}$ and $dom(T)=\mathbb{H}$ then $T$ is a graph of a bounded self-adjoint operator. \cite{sandovici}
			\item If $T$ is a symmetric linear relation on $\mathbb{H}$ and $ran(T)=\mathbb{H}$ then $T$ is a self-adjoint linear relation on $\mathbb{H}$. \cite{sandovici}
			\item If $J:\mathbb{H}\times\mathbb{H}\to \mathbb{H}\times\mathbb{H}$, $J(\{f,g\}):=\{g,-f\}$, then $T^*=JT^\perp=(JT)^\perp$. \cite{sandovici,arens}
			\item If $T$ is a positive self-adjoint linear relation on $\mathbb{H}$ then  $ran(I+T)=\mathbb{H}$ where $I$ the graph of the identity operator. \cite{sandovici}
			\item Let $S$ and $T$ be two linear relations on $\mathbb{H}$. Then $S\subseteq T$ implies  $T^*\subseteq S^*$. \cite{arens}
			\item $(T^{-1})^*=(T^*)^{-1}$. \cite{arens}
		\end{enumerate}
	\end{theorem}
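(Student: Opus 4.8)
The statement collects standard facts about the adjoint, and the plan is to reduce almost all of them to item~(7), the von~Neumann--type identity $T^*=JT^\perp=(JT)^\perp$, treating~(5), (6), (8) separately as the items with analytic content. First I would prove~(7). Unwinding the definition of the relation adjoint, the condition ``$\langle f',h\rangle=\langle f,h'\rangle$ for all $\{h,h'\}\in T$'' is literally ``$\langle\{f,f'\},\{h',-h\}\rangle=0$ for all $\{h,h'\}\in T$'', i.e.\ $\{f,f'\}\perp J\{h,h'\}$; hence $T^*=(JT)^\perp$. Since $J$ is a unitary operator on $\mathbb H\times\mathbb H$ with $J^2=-\mathrm{id}$ and $J^{-1}=J^*=-J$, the general identity $U(M^\perp)=(UM)^\perp$ for unitary $U$ gives $JT^\perp=(JT)^\perp$ as well.

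From the orthogonality form of the definition, item~(1) comes out by specialisation: $\{0,f'\}\in T^*$ says $\langle f',h\rangle=0$ for all $h\in dom(T)$, so $mul(T^*)=(dom(T))^\perp$; dually $ker(T^*)=(ran(T))^\perp$. Item~(2) is then immediate, since $(JT)^\perp$ is an orthogonal complement. For item~(3) I would iterate~(7): $T^{**}=(JT^*)^\perp=\big(J((JT)^\perp)\big)^\perp=\big((J^2T)^\perp\big)^\perp=(T^\perp)^\perp=\overline T$, using $J(M^\perp)=(JM)^\perp$ and $-T=T$; the closed case follows since $(T^\perp)^\perp=T$ for closed subspaces. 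Item~(9) is the order-reversal of $\perp$ applied to $JS\subseteq JT$. For item~(10), write $T^{-1}=UT$ with the flip $U\{f,g\}=\{g,f\}$ (unitary, $U^2=\mathrm{id}$); since a direct check gives $JU=-UJ$, we get $(T^{-1})^*=\big(J(UT)\big)^\perp=\big((UJ)T\big)^\perp=U\big((JT)^\perp\big)=(T^*)^{-1}$. Item~(4) is the relational translation of ``the adjoint exists iff the relation is densely defined'': a relation is single valued exactly when its multivalued part is trivial, and by~(1) this applied to $T^*$ reads $(dom(T))^\perp=\{0\}$.

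The remaining items carry the weight. Item~(8) is exactly the theorem proved immediately before this list, so nothing new is needed there. For item~(6), let $T$ be symmetric ($T\subseteq T^*$) with $ran(T)=\mathbb H$; given $\{g,g'\}\in T^*$, fullness of the range provides $f$ with $\{f,g'\}\in T\subseteq T^*$, so $\{g-f,0\}\in T^*$ and thus $g-f\in ker(T^*)=(ran(T))^\perp=\{0\}$ by~(1), whence $\{g,g'\}=\{f,g'\}\in T$; this gives $T^*\subseteq T$, hence $T=T^*$. For item~(5), $dom(T)=\mathbb H$ forces $mul(T^*)=(dom(T))^\perp=\{0\}$ by~(1), so $T^*$, and hence $T\subseteq T^*$, is single valued; writing $T=G(A)$, symmetry becomes $A\subseteq A^*$ with $dom(A)=\mathbb H$, and the Hellinger--Toeplitz theorem makes $A$ bounded and self-adjoint. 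The genuine obstacle is therefore concentrated in item~(8): proving $ran(I+T)=\mathbb H$ for positive self-adjoint $T$ requires the completeness argument of the preceding theorem (positivity bounds $I+T$ below on the relevant difference vectors, so its range is closed, while $(ran(I+T))^\perp=ker(I+T^*)=ker(I+T)=\{0\}$ again by positivity); for~(5) the only non-formal input is Hellinger--Toeplitz, and in~(6) the care needed is simply to read ``symmetric'' and ``range'' in the relation sense.
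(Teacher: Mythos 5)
Your proposal is correct, but there is essentially nothing in the paper to compare it against: this theorem is presented as a summary of known results, with citations to Arens and Sandovici standing in for proofs, and the only item the paper actually proves is (8), as the separate theorem immediately preceding the list --- to which you correctly defer. Your strategy of first establishing $T^*=(JT)^\perp=JT^\perp$ from the inner product on $\mathbb{H}\times\mathbb{H}$ and then reading off (1), (2), (3), (9) and (10) from properties of orthogonal complements and of the unitaries $J$ and $U$ (including the anticommutation $JU=-UJ$ for the inverse formula) is exactly the classical Arens route, and the computations check out; the reduction of (5) to Hellinger--Toeplitz via $mul(T^*)=(dom(T))^\perp$ and of (6) to $ker(T^*)=(ran(T))^\perp$ is also sound and, in the case of (6), closely parallels the argument the paper later gives for its Theorem 3.21. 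One point you should make explicit rather than pass over silently: item (4) as literally printed (``$T$ is single valued if and only if $(dom(T))^\perp=\{0\}$'') is false --- $T=\mathbb{H}\times\mathbb{H}$ has dense domain but is maximally multivalued, as is the closure of any densely defined non-closable operator --- and what you actually prove is the corrected statement that $T^*$ is single valued if and only if $dom(T)$ is dense, which is an immediate consequence of $mul(T^*)=(dom(T))^\perp$ from item (1). That is the right reading of the Arens result, but you are amending the statement, not proving it as written, and a referee would want that said.
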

	
	\begin{theorem}
		Let $T$ and $S$ be two linear relations on $\mathbb{H}$. If the kernel and range of $T$ and $S$ are equal, then $S\subseteq T$ if and only if $S=T$. \cite{arens}
	\end{theorem}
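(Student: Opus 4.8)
The plan is to prove the non-trivial implication: assuming $S\subseteq T$ together with $ker(S)=ker(T)$ and $ran(S)=ran(T)$, deduce $T\subseteq S$. The reverse implication $S=T\Rightarrow S\subseteq T$ is immediate, so all of the content lies in establishing $T\subseteq S$ under these hypotheses.

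First I would fix an arbitrary pair $\{x,y\}\in T$ and try to exhibit it as an element of $S$. Since $y\in ran(T)=ran(S)$, there is some $x'\in\mathbb{H}$ with $\{x',y\}\in S$. By the hypothesis $S\subseteq T$ this pair also belongs to $T$, and since $T$ is a linear subspace of $\mathbb{H}\times\mathbb{H}$ we get $\{x,y\}-\{x',y\}=\{x-x',0\}\in T$; that is, $x-x'\in ker(T)$.

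Next I would invoke the kernel hypothesis $ker(T)=ker(S)$, which gives $x-x'\in ker(S)$, hence $\{x-x',0\}\in S$. Using the linearity of $S$ once more, $\{x,y\}=\{x',y\}+\{x-x',0\}\in S$. As $\{x,y\}\in T$ was arbitrary, this proves $T\subseteq S$, and combined with $S\subseteq T$ we conclude $S=T$.

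I do not expect a genuine obstacle here; the only point to be careful about is that the two cancellation steps really do use closure of $T$ and of $S$ under the vector-space operations of $\mathbb{H}\times\mathbb{H}$, applied at exactly those two places. One could alternatively run the same argument on the inverse relations $S^{-1}$ and $T^{-1}$, for which the hypotheses translate into equality of domains and of multi-values together with $S^{-1}\subseteq T^{-1}$; but the direct version above is the shortest route.
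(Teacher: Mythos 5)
Your argument is correct. The paper gives no proof of this theorem (it only cites Arens), and your route is the standard one: pick $\{x,y\}\in T$, use $ran(T)=ran(S)$ to find $\{x',y\}\in S\subseteq T$, subtract inside $T$ to land $x-x'$ in $ker(T)=ker(S)$, and add back inside $S$ to recover $\{x,y\}\in S$. Both cancellation steps use only that $S$ and $T$ are linear subspaces of $\mathbb{H}\times\mathbb{H}$, which is exactly the hypothesis available, so there is no gap.
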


	\begin{theorem} 
		(Generalization of von Neumann Theorem, \cite{sandovici}) Let $T$ be a closed linear relation on $\mathbb{H}$. Then the product $T^*T$ and  $TT^*$ are a positive self-adjoint linear relations on $\mathbb{H}$.
		
	\end{theorem}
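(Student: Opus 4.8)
The plan is to transcribe von Neumann's classical argument into the language of linear relations, the key tool being the orthogonal decomposition of $\mathbb{H}\times\mathbb{H}$ furnished by the unitary $J$ of item (7) of the summary theorem. First I would record that $J\{f,g\}=\{g,-f\}$ is a \emph{unitary} operator on the product Hilbert space $\mathbb{H}\times\mathbb{H}$ with $J^2=-I$, and that $T^*=(JT)^\perp$. Since $T$ is closed, $JT$ is a closed subspace, so $\mathbb{H}\times\mathbb{H}=JT\oplus(JT)^\perp=JT\oplus T^*$ is an orthogonal direct sum; applying the unitary $J$ and using $J^2T=-T=T$ (a linear subspace coincides with its negative) this becomes the fundamental decomposition
$$\mathbb{H}\times\mathbb{H}=T\oplus JT^*\text{.}$$

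From here I would extract $ran(I+T^*T)=\mathbb{H}$. Given $h\in\mathbb{H}$, write $\{h,0\}=\{f,f'\}+J\{g,g'\}$ with $\{f,f'\}\in T$ and $\{g,g'\}\in T^*$; since $J\{g,g'\}=\{g',-g\}$ this forces $h=f+g'$ and $g=f'$, hence $\{f',g'\}\in T^*$, and therefore $\{f,g'\}\in T^*T$ by the definition of the product of relations. Consequently $\{f,h\}=\{f,f+g'\}\in I+T^*T$, so $I+T^*T$ has full range. Positivity of $T^*T$ is then immediate: if $\{f,g'\}\in T^*T$ via $\{f,f'\}\in T$, $\{f',g'\}\in T^*$, the defining identity of $T^*$ evaluated at $\{f,f'\}\in T$ gives $\langle g',f\rangle=\langle f',f'\rangle\geq 0$. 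A similar two-step manipulation with two pairs of $T^*T$, pushing inner products through the defining identity of $T^*$, yields $\langle \ell,j\rangle=\langle k,m\rangle$ for all $\{k,\ell\},\{j,m\}\in T^*T$, i.e. $T^*T\subseteq(T^*T)^*$; equivalently $I+T^*T$ is symmetric (using $(I+A)^*=I+A^*$).

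Now item (6) of the summary theorem applies to $I+T^*T$: a symmetric linear relation with full range is self-adjoint. Since $(I+A)^*=I+A^*$ for any linear relation $A$, the equality $I+T^*T=(I+T^*T)^*=I+(T^*T)^*$ gives $T^*T=(T^*T)^*$, so $T^*T$ is positive and self-adjoint. Finally, to treat $TT^*$, I would apply all of the above to the relation $S:=T^*$, which is closed by item (2) and satisfies $S^*=T^{**}=T$ by item (3); then $S^*S=TT^*$ is positive and self-adjoint.

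I expect the decomposition step to be the main obstacle, or at least the only non-routine one: one has to be sure that $J$ is genuinely unitary on the product space (so that it preserves orthogonal complements), that the closedness hypothesis on $T$ is exactly what makes $JT$ closed and hence a genuine orthogonal summand, and that the identities $J^2=-I$ and $-V=V$ for subspaces $V$ are combined correctly to land on $\mathbb{H}\times\mathbb{H}=T\oplus JT^*$. Once that decomposition is available, everything else is bookkeeping with the definitions of product, adjoint and sum of relations, together with the already-established fact that symmetry plus full range implies self-adjointness.
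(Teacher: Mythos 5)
Your proof is correct. The paper itself gives no proof of this theorem --- it is quoted from \cite{sandovici} as a known result --- but your argument is the standard one for it: the decomposition $\mathbb{H}\times\mathbb{H}=T\oplus JT^{*}$ (valid precisely because $T$ is closed) yields $ran(I+T^{*}T)=\mathbb{H}$, symmetry and positivity of $T^{*}T$ follow by unwinding the definition of the adjoint, item (6) of the summary theorem applied to $I+T^{*}T$ gives self-adjointness, and the $TT^{*}$ case follows by replacing $T$ with the closed relation $T^{*}$ and using $T^{**}=T$. All the individual steps check out, including the two points you flag as delicate ($J$ unitary with $J^{2}=-I$, and $(I+A)^{*}=I+A^{*}$ together with the fact that $I+A$ determines $A$).
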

	
	The next theorem is the reformulation of the general Stone-lemma.
	
	\begin{theorem}
		\label{new3}
		Let $\mathbb{H}$ be a Hilbert space and $T$ a linear relation on $\mathbb{H}$. If $ran(T\cap T^*)=\mathbb{H}$ then $T$ is self-adjoint.
	\end{theorem}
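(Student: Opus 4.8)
The plan is to follow the strategy of Theorem~\ref{new1}, but in a simplified form: since the intersection of two linear relations is again a linear relation, no auxiliary operator is needed, and in place of the Stone lemma one invokes the relation-theoretic fact (listed among the known properties above) that a symmetric linear relation with full range is automatically self-adjoint. So first I would set $R := T\cap T^*$. Being the intersection of two linear subspaces of $\mathbb{H}\times\mathbb{H}$, $R$ is itself a linear relation on $\mathbb{H}$, and by hypothesis $ran(R)=\mathbb{H}$.

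Next I would verify that $R$ is symmetric. From $R\subseteq T$ and the inclusion-reversing property of the adjoint ($S\subseteq T\Rightarrow T^*\subseteq S^*$) we get $T^*\subseteq R^*$; from $R\subseteq T^*$ the same property gives $T^{**}\subseteq R^*$; and since $T\subseteq T^{**}$ (the closure of $T$ contains $T$) we conclude $T\subseteq R^*$. Combining this with $R\subseteq T$ yields the chain $R\subseteq T\subseteq R^*$, so in particular $R\subseteq R^*$; that is, $R$ is symmetric.

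Thus $R$ is a symmetric linear relation with $ran(R)=\mathbb{H}$, so by the cited property (a symmetric linear relation with full range is self-adjoint) we obtain $R=R^*$. To finish, note that $T\subseteq R^*=R=T\cap T^*\subseteq T$, which forces $T=R$, and hence $T^*=R^*=R=T$. Therefore $T$ is self-adjoint, as claimed.

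The only real obstacle here is bookkeeping: one must be careful to apply the adjoint-reversing inclusion in the correct direction and to use $T\subseteq T^{**}$ legitimately when building the chain $R\subseteq T\subseteq R^*$. Once that chain is in place, the symmetry of $R$, its full range, and the quoted self-adjointness criterion for linear relations do all the remaining work, so there is no delicate estimate or construction to carry out.
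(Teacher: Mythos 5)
Your proof is correct, and it takes a genuinely different route from the one in the paper. You reduce the statement to the known fact (item (6) of the summary theorem, from \cite{sandovici}) that a symmetric linear relation with full range is self-adjoint: setting $R=T\cap T^*$, the inclusion-reversing property of the adjoint gives $T^*\subseteq R^*$ and $T^{**}\subseteq R^*$, whence $R\subseteq T\subseteq T^{**}\subseteq R^*$, so $R$ is symmetric with $ran(R)=\mathbb{H}$; then $R=R^*$ and the chain $T\subseteq R^*=R\subseteq T$ collapses everything to $T=R=T^*$. This exactly mirrors the strategy used for Theorem \ref{new1} in the operator setting, and it is shorter and makes the logical dependence on the relation-theoretic Stone lemma explicit. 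The paper's own proof of Theorem \ref{new3} is instead a self-contained element-chasing argument: for a given $\{g,g'\}$ in $T$ (respectively $T^*$) it invokes the surjectivity of $T\cap T^*$ twice, first to produce $\{h,h'\}\in T\cap T^*$ with $h'=g'$ and then $\{k,k'\}\in T\cap T^*$ with $k'=g-h$, and concludes $g=h$ from $\langle g-h,g-h\rangle=\langle k,0\rangle=0$; in effect it reproves the Stone lemma inline rather than citing it. Both arguments are valid; yours buys brevity and reuse of the stated toolbox, while the paper's buys independence from the quoted result.
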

	\begin{proof}
		
		Let us consider $S=T\cap T^*$.
		
		\begin{enumerate}[label=(\alph*)]
			\item $T\subseteq T^*$ case:\\ 
			
			Let $\{g,g'\}\in T$. Then there exists $\{h,h'\}\in S$ such that $g'=h'$. Therefore $\{g-h,0\}\in T^*$ and there exists $\{k,k'\}\in S$ such that $k'=g-h$.\\
			
			 Moreover $\langle g-h,g-h\rangle=\langle k',g-h\rangle = \langle k,0\rangle $, so $h=g$. 
			 
			 Finally,  
			 $$\{g,g'\}=\{h,g'\}=\{h,h'\}\in S=T\cap T^*\subseteq T^*$$ 
			 so $\{g,g'\}\in T^*$,  therefore $T\subseteq T^*$.\\
			
			\item $T^*\subseteq T$ case:\\
			
			Let $\{g,g'\}\in T^*$. Then there exists $\{h,h'\}\in S$ such that $g'=h'$. Now we have $\{g-h,0\}\in T^*$ and there exists $\{k,k'\}\in S$ such that $k'=g-h$. \\
			
			Therefore $\langle g-h,g-h\rangle=\langle k',g-h\rangle = \langle k,0\rangle $ so $h=g$.\\
			
			 Finally 
			 $$\{g,g'\}=\{h,g'\}=\{h,h'\}\in S=T\cap T^*\subseteq T\text{,}$$
			  so $\{g,g'\}\in T$. As a consequence $T^*\subseteq T$.\\
			
			Summarising (a) and (b) we get that $T=T^*$.
		\end{enumerate}
	\end{proof}
	
	\begin{theorem}
		\label{new4}
		Let $\mathbb{H}$ be a Hilbert space and $T$ a positive linear relation on $\mathbb{H}$. Then the following are equivalent:
		\begin{enumerate}[label=(\arabic*)]
			\item $(dom(T))^\perp=0$ and $T^*=T$,
			\item There exists an operator such that $G(S)=T\cap T^*$ and \\
			$ran(I+S)=\mathbb{H}$.
		\end{enumerate}
	\end{theorem}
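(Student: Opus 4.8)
\medskip

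The plan is to establish the two implications separately. The direction $(1)\Rightarrow(2)$ is short, while $(2)\Rightarrow(1)$ I would prove by adapting, essentially line by line, the three-step argument from the proof of Theorem~\ref{new2}, the one difference being that for linear relations facts about $T^*$ are read directly off its definition rather than obtained by manipulating adjoints of sums of operators.

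For $(1)\Rightarrow(2)$, assume $(dom(T))^\perp=\{0\}$ and $T=T^*$. Then $T\cap T^*=T$, so it suffices to produce an operator $S$ with $G(S)=T$ and $ran(I+S)=\mathbb{H}$. Here $T$ is single valued because $mul(T)=mul(T^*)$ (since $T=T^*$) $=(dom(T))^\perp$ (by the identity recalled above) $=\{0\}$, so $T=G(S)$ for an operator $S$. Since $T$ is positive and self-adjoint, the theorem recalled earlier in this section gives $ran(I+T)=\mathbb{H}$, and because $I+G(S)=G(I+S)$ this is precisely $ran(I+S)=\mathbb{H}$.

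For $(2)\Rightarrow(1)$, let $S$ be the operator with $G(S)=T\cap T^*$ and $ran(I+S)=\mathbb{H}$. I would first record the two facts used throughout: $G(S)\subseteq T$, which makes $S$ a positive operator (for $f\in dom(S)$ the pair $\{f,Sf\}$ lies in the positive relation $T$) and also yields $ran(I+S)\subseteq ran(I+T)$, hence $ran(I+T)=\mathbb{H}$; and $G(S)\subseteq T^*$. Then I would argue in three steps, parallel to the proof of Theorem~\ref{new2}:
\begin{enumerate}[label=(\alph*)]
	\item If $g\in(dom(T))^\perp$ then $\{0,g\}\in T^*$; writing $g=k+Sk$ with $\{k,Sk\}\in T\cap T^*$ (possible since $ran(I+S)=\mathbb{H}$) and pairing $\{0,g\}\in T^*$ with $\{k,Sk\}\in T$ gives $\langle g,k\rangle=0$, so $\langle k,k\rangle+\langle Sk,k\rangle=0$; positivity of both summands forces $k=0$, hence $g=0$. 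Thus $(dom(T))^\perp=\{0\}$.
	\item If $\{g,g'\}\in T^*$, write $g+g'=k+Sk$ with $\{k,Sk\}\in T\cap T^*$; subtracting the two pairs, which both lie in $T^*$, gives $\{g-k,-(g-k)\}\in T^*$, and by the definition of the adjoint this means $\langle g-k,h+h'\rangle=0$ for every $\{h,h'\}\in T$, i.e.\ $g-k\perp ran(I+T)=\mathbb{H}$. Hence $g=k$ and $\{g,g'\}=\{k,Sk\}\in T\cap T^*\subseteq T$, so $T^*\subseteq T$.
	\item If $\{g,g'\}\in T$, write $g+g'=k+Sk$ with $\{k,Sk\}\in T\cap T^*$; subtracting the two pairs, which both lie in $T$, gives $\{g-k,-(g-k)\}\in T$, so positivity yields $-\|g-k\|^2\geq0$, hence $g=k$ and $\{g,g'\}=\{k,Sk\}\in T\cap T^*\subseteq T^*$, so $T\subseteq T^*$.
\end{enumerate}
Steps (b) and (c) give $T=T^*$, and together with (a) this is exactly condition~(1).

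The manipulations with pairs in $T$ and $T^*$ and the adjoint bookkeeping are routine. The step that needs the most care, and the place where the relation argument genuinely differs from the operator proof of Theorem~\ref{new2}, is step (b): in the operator setting one uses $\ker(I+S^*)=(ran(I+S))^\perp$, which rests on $(I+S)^*=I+S^*$, whereas here I would avoid any identity for the adjoint of a sum and instead observe directly from the definition of $T^*$ that $\{u,-u\}\in T^*$ is equivalent to $u\perp ran(I+T)$. It is also worth stressing that the hypothesis supplies $ran(I+S)=\mathbb{H}$, not $ran(S)=\mathbb{H}$, so one cannot simply invoke Theorem~\ref{new3}; the whole argument is built around $I+S$ and $I+T$ for this reason, and positivity of $T$ (used in (a) and (c)) is essential — without it $T\cap T^*$ may be a proper, and even single-valued, subrelation of a non-self-adjoint $T$.
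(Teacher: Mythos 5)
Your proof is correct and follows essentially the same route as the paper's: the forward implication via single-valuedness of $T$ plus $ran(I+T)=\mathbb{H}$ for positive self-adjoint relations, and the converse via the stepwise use of $ran(I+S)=\mathbb{H}$, obtaining $(dom(T))^\perp=\{0\}$ and $T\subseteq T^*$ from positivity and $T^*\subseteq T$ from orthogonality to $ran(I+T)$. Your only deviations are cosmetic refinements: you work with pairs throughout, so you never need the paper's separate step showing $T^*$ is single valued, and you derive $\{u,-u\}\in T^*\Leftrightarrow u\perp ran(I+T)$ directly from the definition of the adjoint instead of passing through the identity $\ker(I+T^*)=\ker((I+T)^*)=(ran(I+T))^\perp$ used in the paper.
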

	\begin{proof}
		
		$(1)\to (2)$:
		
		$T=G(S)$ i.e. $S$ is a positive self-adjoint operator. As a consequence $ran(I+S)=\mathbb{H}$.
		
		$(2)\to (1)$:
		
		Now $S$ is a positive symmetric operator, therefore $(dom(S))^\perp=\{0\}$. Then $S$ is a positive self-adjoint operator.
		
		\begin{enumerate}[label=\alph*)]
			\item $g\in (dom(T)^\perp)$: \\
			There exists $h\in dom(S)\subseteq dom(T)$ such that $g=h+Sh$. 
			
			We know that $h\in dom(T)$ therefore 
			
			$$0=\langle g,h\rangle = \langle h,h\rangle + \langle Sh,h\rangle\text{.}$$ 
			
			We also know that $S$ is positive i.e.  $\langle Sh,h\rangle$ so $h=0$. \\
			
			Finally the linearity of $S$ implies that $g=0$ and as a consequence $T$ is densely defined.\\
			
			\item $\{0,g\}\in T^*$: \\
			$0=\langle h',0\rangle=\langle h,g\rangle$ for all $\{h,h'\}\in T$ therefore $g\in (dom(T))^\perp$. As a consequence $T^*$ is the graph of an operator. \\
			
			\item $g\in dom(T^*)$: \\
			
			There exists $h\in dom(S)$ such that $g+T^*g=h+Sh$. We know that $G(S)\subseteq T^*$ therefore $(g-h)+T^*(g-h)=0$.\\
			
			Then $(g-h)\in ker(I+T^*)=ker((I+T)^*)=(ran(I+T))^\perp\subseteq (ran(I+S))^\perp=\{0\}$. \\
			
			As a consequence $T^*g=Sh=Sg=Tg$ therefore $g=h\subseteq som(S)\subseteq dom(T)$. \\
			
			With this, we proved that $T^*\subseteq T$.\\
			
			\item $g\in dom(T)$: \\
			
			There exists $h\in dom(S)$ such that $g+Tg=h+Th$. 
			
			We know that $G(S)\subseteq T$ so $(g-h)+T(g-h)=0$.
			
			We also know that $T$ is positive so  $\langle T(g-h),g-h\rangle \geq 0$.
			
			  \begin{align*}
			  	\langle 0, g-h\rangle &= \langle (g-h)+T(g-h),g-h\rangle \\
			  	&= \langle g-h,g-h\rangle + \langle T(g-h),g-h\rangle 
			  \end{align*} 
			  
			  i.e. $g=h\subseteq dom(S)\subseteq dom(T^*)$\\
			
			So we proved that $T\subseteq T^*$. \\
		\end{enumerate}
		
		Summarizing the previous points we get that $T$ is a densely defined self-adjoint operator.
		
	\end{proof}

	\textit{Remark} The previous result not only gives us an other equivalent condition for a linear relation to be self-adjoint, but also shows that if the second condition holds, then $S$ will be the graph of an operator.

	\newpage	
	\bibliographystyle{plain}
	
\end{document}